\newtheorem{theorem}{Theorem}    
\newtheorem{proposition}{Proposition} 
\newtheorem{question}{Question} 
\newtheorem{corollary}{Corollary}
\theoremstyle{definition}
\newtheorem*{remark*}{Remark}
\newcommand{\R}{\mathbb{R}}
\title{A note on knot fertility}
\author[T.Ito]{Tetsuya Ito}
\address{Department of Mathematics, Kyoto University, Kyoto 606-8502, JAPAN}
\email{tetitoh@math.kyoto-u.ac.jp}
\begin{document}

\begin{abstract}
We show that a knot whose minimum crossing number $c(K)$ is even and greater than $30$ is not fertile; there exists a knot $K'$ with crossing number less than $c$ such that $K'$ is not obtained from a minimum crossing number diagram of $K$ by suitably changing the over-under information.
\end{abstract}

\maketitle

A \emph{(knot) shadow} $S=S(D)$ of a knot diagram $D$ is an immersed circle (in $\R^{2}$ or $S^{2}$) obtained from $D$ by forgetting the over-under information at the crossings. A knot $K$ is \emph{supported} by a shadow $S$ if there is a diagram $D'$ of $K$ whose shadow is $S$. Namely, by suitably assigning over-under information at each crossing one can get $K$ from the shadow $S$. 

A knot $K$ is \emph{fertile} if for any prime knot $K'$ with $c(K') <c(K)$, there exists a minimum crossing number diagram $D$ of $K$ such that $S(D)$ supports $K'$. The notion of fertility was introduced in \cite{CH+}, where they observed that several low-crossing number knots $0_1, 3_1, 4_1, 5_2, 6_2, 6_3, 7_6$ are fertile. They also showed that no knots with crossing number 8--10 are fertile, and they asked the follwowing question.
\begin{question}
\label{question:main}
Is there a fertile knot with crossing number $>7$?
\end{question}

In \cite{Ha} Hanaki showed that there are no \emph{alternating} fertile knot with crossing number $>7$ by using Tait conjectures, the structure of minimum crossing number diagram of alternating knots.

For a knot $K$ we denote by $c(K)$ and $b(K)$ the minimum crossing number and the braid index. In this note we show the following non-existence result of fertile knots with large crossing numbers. 

\begin{theorem}
\label{theorem:main}
If the crossing number $c(K)$ of $K$ is even and $c(K)>30$, then $K$ is not fertile.
If the crossing number $c(K)$ of $K$ is odd, $b(K)\leq 3$ and $c(K)>25$, then $K$ is not fertile.
\end{theorem}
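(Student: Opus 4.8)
The engine of the argument is that the number of Seifert circles of a knot diagram depends only on its shadow: Seifert's algorithm smooths each crossing in the way compatible with the orientation, and the over/under information plays no role, while a shadow carries an essentially unique orientation (reversing the orientation of the circle does not change the smoothings). Thus a shadow $S$ with $c$ crossings has a well-defined Seifert circle number $s(S)\geq 2$, and any knot $K'$ supported by $S$ admits a connected diagram with $c$ crossings and $s(S)$ Seifert circles; in particular $b(K')\leq s(S)$, and the associated Seifert surface shows $g(K')\leq (c-s(S)+1)/2$, an integer.

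Assume $K$ is fertile and set $c=c(K)$. Take $K'=T(2,c-1)$ if $c$ is even and $K'=T(2,c-2)$ if $c$ is odd; then $K'$ is a prime knot with $c(K')<c$ and $g(K')$ equal to $(c-2)/2$, resp.\ $(c-3)/2$. By fertility there is a minimum crossing number diagram $D$ of $K$ whose shadow supports $K'$, so $g(K')\leq (c-s(S(D))+1)/2$, giving $s(S(D))\leq 3$ (even case) or $\leq 4$ (odd case); a parity check on the integer $(c-s(S(D))+1)/2$ then leaves $s(S(D))=3$ when $c$ is even and $s(S(D))\in\{2,4\}$ when $c$ is odd. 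If $s(S(D))=2$ then $D$ is, after isotopy, a reduced closed $2$-braid diagram with $c$ crossings, forcing $K=T(2,\pm c)$; this is a two-component link when $c$ is even (impossible), and an alternating knot when $c$ is odd, hence not fertile for $c>7$ by Hanaki's theorem --- a contradiction. So $s(S(D))=3$ for even $c$ and $s(S(D))=4$ for odd $c$, and in both cases $b(K)=3$: the value $b(K)=1$ forces $c=0$, and the value $b(K)=2$ is excluded exactly as $s(S(D))=2$ was (for odd $c$ the hypothesis $b(K)\leq 3$ is used only to rule out $b(K)\geq 4$). Hence $K$ is the closure of a $3$-braid $\beta\in B_3$.

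It now suffices to prove that a fertile closed $3$-braid knot has $c(K)\leq 30$ when $c(K)$ is even and $c(K)\leq 25$ when $c(K)$ is odd. The shadow $S(D)$ has only $3$ or $4$ Seifert circles, hence one of a short list of combinatorial types; when these circles are coherently nested it is the shadow of a closed $3$-braid, and otherwise a bounded number of Vogel-type moves turns it into one. Since $S(D)$ supports $T(2,c-1)$, resp.\ $T(2,c-2)$, which has braid index $2$, the sign assignment on $S(D)$ that realizes this torus knot destabilizes to a $2$-braid; because the braid exponent sum is a conjugacy invariant that changes by $\pm1$ under (de)stabilization, that assignment has exponent sum $\pm(c-1)\pm1$, resp.\ $\pm(c-2)\pm1$, so all but at most two of the $c$ crossings of $S(D)$ carry the same sign in it. Meanwhile minimality of $D$ forces the sign assignment that produces $K$ itself to be reduced (no nugatory crossing, no cancelling bigon, no available destabilization). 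Combining these two constraints with the Birman--Menasco/Murasugi normal form for closed $3$-braids, the known crossing number and genus formulas for $3$-braid links, and Hanaki's theorem in the alternating sub-cases, one forces the parameters of $\beta$ to be bounded and obtains $c(K)\leq 30$, resp.\ $c(K)\leq 25$, contradicting the hypothesis.

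The substantive part --- and the source of the explicit constants $30$ and $25$ --- is this last step: ruling out a reduced $3$-braid diagram of a high-crossing knot whose shadow also supports the long torus knot $T(2,c-1)$ or $T(2,c-2)$. I expect the main obstacles to be the non-alternating $3$-braids, where Hanaki's theorem is unavailable and one must argue from the classification directly, and the cases where the Seifert circles of $D$ are not coherently nested, so that $S(D)$ is not literally a closed braid shadow and the syllable analysis must be carried out only after applying Vogel's algorithm.
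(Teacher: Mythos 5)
Your first paragraph and the reduction to $s(S(D))\leq 3$ (even case) or $\leq 4$ (odd case) via the torus knots $T(2,c-1)$, $T(2,c-2)$, and hence to $b(K)\leq 3$, is exactly the paper's Proposition 2 and Corollary 3 (the extra case analysis pinning down $s(S(D))$ exactly and invoking Hanaki for $2$-braids is not needed: $b(K)\leq s(S(D))$ by Yamada already suffices). But from that point on there is a genuine gap: your third paragraph is an admitted sketch, and the route you propose (Vogel moves, exponent sums, Birman--Menasco/Murasugi normal forms) is not the mechanism that produces the constants $30$ and $25$, nor does it obviously produce any bound. In particular, the sign of a crossing flips when the over/under assignment at that crossing is changed, so knowing that the assignment realizing the torus knot has writhe close to $\pm(c-1)$ constrains the assignment realizing $K$ only weakly; the "all but two crossings have the same sign" observation does not transfer to $K$'s diagram.

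What you are missing are the paper's two quantitative lemmas, which squeeze $2g(K)-1$ from both sides. Upper bound: fertility applied to the \emph{twist knot} $K_{c-1}$, whose braid index is about $c/2$, forces some minimum crossing diagram $D$ of $K$ to have $s(D)\geq b(K_{c-1})$ (again by Yamada, since Seifert circles depend only on the shadow), hence $2g(K)-1\leq 2g(D)-1=c-s(D)\leq c-b(K_{c-1})\leq c/2$ (resp. $(c-1)/2$). Lower bound: for $b(K)\leq 3$, writing a minimal Bennequin representative in the band generators $a_1,a_2,a_3=\sigma_2\sigma_1\sigma_2^{-1}$ of $B_3$ with letter counts $A_1,A_2,A_3$ and $A_3\leq A_1,A_2$ (after conjugating by $\sigma_1\sigma_2$), Bennequin's theorem gives $2g(K)-1=-3+A_1+A_2+A_3$, while the resulting diagram has $A_1+A_2+3A_3\geq c(K)$ crossings; since $A_1+A_2\geq 2A_3$, this yields $2g(K)-1\geq -3+\frac{3}{5}c(K)$. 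Combining, $-3+\frac{3}{5}c\leq \frac{c}{2}$ gives $c\leq 30$, and $-3+\frac{3}{5}c\leq\frac{c-1}{2}$ gives $c\leq 25$. Without some substitute for both of these inequalities your argument cannot close.
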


For a knot diagram $D$ (resp. shadow $S$), we denote by $c(D)$ and $s(D)$ (resp, $c(S)$ and $s(S)$) the number of crossings and Seifert circles, respectively.
The \emph{genus} of a diagram $D$ is the genus of Seifert surface of obtained by Seifert's algorithm, which is given by $g(D) = \frac{1}{2}(-s(D)+c(D)+1)$. 

We define the genus of a shadow $S$ by $g(S) = \frac{1}{2}(-s(S)+c(S)+1)$. 
Clearly, if $K$ is supported by $S$ then the genus of knot $K$ satisfies $g(K) \leq g(S)$. This simple observation leads to the following.

\begin{proposition}
\label{prop:s(D)}
Assume that $K$ is fertile.
\begin{enumerate}
\item If $c(K)$ is even, then $K$ admits a minimum crossing diagram $D$ with $s(D)\leq 3$ and $2g(K)-1 \leq \frac{c(K)}{2}$.
\item If $c(K)$ is odd, then $K$ admits a minimum crossing diagram $D$ with $s(D)\leq 4$ and $2g(K)-1 \leq \frac{c(K)-1}{2}$
\end{enumerate}
\end{proposition}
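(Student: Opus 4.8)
The plan is to test the fertility hypothesis against torus knots $T(2,n)$, which maximize the genus among prime knots of a given crossing number, and to play the inequality $g(K')\le g(S)$ off against the identity $g(S(D))=\tfrac12\bigl(-s(D)+c(D)+1\bigr)$.

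\smallskip
\noindent\emph{The Seifert circle bound.}
Suppose first that $c(K)$ is even. The knot $K'=T(2,c(K)-1)$ is prime, has crossing number $c(K)-1<c(K)$, and has genus $g(K')=\tfrac12 c(K)-1$. By fertility there is a minimum crossing diagram $D$ of $K$ whose shadow supports $K'$, so that
\[
\tfrac12 c(K)-1 \;=\; g(K') \;\le\; g(S(D)) \;=\; \tfrac12\bigl(-s(D)+c(K)+1\bigr),
\]
which rearranges to $s(D)\le 3$. I would then sharpen this using parity: since $g(D)=\tfrac12(-s(D)+c(D)+1)$ is a nonnegative integer and $c(D)=c(K)$ is even, $s(D)$ must be odd, and $s(D)=1$ would force $K$ to be the unknot; hence $s(D)=3$. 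When $c(K)$ is odd, the identical argument with $K'=T(2,c(K)-2)$ (prime, crossing number $c(K)-2<c(K)$, genus $\tfrac12(c(K)-3)$) gives $s(D)\le 4$, and the same parity consideration now forces $s(D)$ to be even, so $s(D)\in\{2,4\}$.

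\smallskip
\noindent\emph{The genus bound, and the main obstacle.}
Applying the crude estimate $g(K)\le g(D)$ to the diagram produced above only yields $2g(K)-1\le c(K)-3$ in the even case, which is far weaker than the asserted $2g(K)-1\le \tfrac12 c(K)$; so the genus inequality cannot be read off a single diagram, and this is where the real work lies. The plan is to exploit the rigidity forced by the first part: a minimum crossing diagram with only three (resp.\ four) Seifert circles whose shadow supports the genus‑maximal torus knot $T(2,c(K)-1)$ (resp.\ $T(2,c(K)-2)$) is extremely constrained — its Seifert graph has three (resp.\ four) vertices, $c(K)$ edges, no cut edge (as $D$ is reduced), and realizes the fiber surface of that torus knot via Seifert's algorithm, since the genus inequality above is then an equality and torus knots are fibered. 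One should be able to deduce from this that any knot supported by such a shadow, $K$ included, has its genus confined to roughly $\tfrac14 c(K)$. Pinning down which three–Seifert–circle shadows on $c(K)$ crossings can support $T(2,c(K)-1)$, and bounding the genera of the other knots they carry, is the step I expect to be hardest; granting that structural input, the two genus inequalities and the four–Seifert–circle (odd) case should follow in the same way.
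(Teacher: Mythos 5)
Your first half (the Seifert circle bounds) is correct and is exactly the paper's argument: test fertility against $T(2,c(K)-1)$ when $c(K)$ is even and $T(2,c(K)-2)$ when $c(K)$ is odd, and compare genera. (Your parity refinement pinning $s(D)$ down to $3$, resp.\ $\{2,4\}$, is a harmless extra; in fact your handling of the parities is cleaner than the paper's, whose proof has the even/odd labels transposed.)

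The second half has a genuine gap, and the route you sketch is not the one that closes it. You try to extract the genus bound from the \emph{same} diagram $D$ whose shadow supports the torus knot, by arguing that a $3$--Seifert--circle shadow carrying the fiber surface of $T(2,c-1)$ is so rigid that every knot it supports has genus about $\tfrac14 c(K)$. Nothing in the statement forces the two conclusions to come from the same diagram, and the torus knot is the wrong test knot here: it was chosen precisely because it forces $s(D)$ to be \emph{small}, which makes $g(D)$ large and can only ever give $2g(K)-1\le c(K)-3$. The paper instead tests fertility against a second knot, the twist knot $K_{c-1}$ with $c(K_{c-1})=c(K)-1$, whose braid index is about $\tfrac12 c(K)$ (namely $\tfrac{c}{2}$ for $c$ even, $\tfrac{c+1}{2}$ for $c$ odd). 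Fertility provides a (generally different) minimum crossing diagram $D'$ of $K$ whose shadow supports $K_{c-1}$; since the Seifert circles depend only on the shadow, $K_{c-1}$ has a diagram with $s(D')$ Seifert circles, and Yamada's theorem (the braid index equals the minimal number of Seifert circles over all diagrams) forces $s(D')\ge b(K_{c-1})$. Then $2g(K)-1\le 2g(D')-1=-s(D')+c(K)\le -b(K_{c-1})+c(K)$, which is $\tfrac{c(K)}{2}$, resp.\ $\tfrac{c(K)-1}{2}$. The missing ideas in your proposal are thus: (a) use a low--crossing knot of \emph{large} braid index as a second witness, and (b) invoke the braid index as a lower bound for $s$ of any supporting shadow. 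Your proposed structural analysis of $3$--Seifert--circle shadows is not carried out, and it is doubtful it could yield a bound on $g(K)$ better than $g(S(D))$, since $K$ is read off the same shadow whose genus is already $\tfrac{c(K)}{2}-1$.
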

\begin{proof}
If $c=c(K)$ is odd, there is a minimum crossing diagram $D$ of $K$ such that its shadow $S(D)$ supports the $(2,c-1)$ torus knot $T_{2,c-1}$. Therefore
\[ -2+(c-1) = 2g(T_{2,c-1})-1 \leq 2g(S)-1 = -s(D)+c \]
Similarly, if $c$ is even, from the $(2,c-1)$ torus knot $T_{2,c-2}$ we get 
\[ -2+(c-2) = 2g(T_{2,c-1})-1 \leq 2g(S)-1 = -s(D)+c \]
Let $K_{c-1}$ be the twist knot with $c(K_{c-1})=c-1$, 
\[ b(K_{c-1}) = \begin{cases}\frac{c}{2} & c\mbox{:even}\\
\frac{c+1}{2} & c\mbox{:odd}
\end{cases}\]
Thus for a minimum crossing diagram $D$ whose shadow supports $K_{c-1}$
\[ 2g(K)-1 \leq 2g(D)-1=-s(D)+c \leq -b(K_{c-1})+c \leq 
\begin{cases}\frac{c}{2} & c\mbox{:even}\\
\frac{c(K)-1}{2} & c\mbox{:odd}
\end{cases}\]
 \end{proof}

\begin{corollary}
\label{cor:braid-index}
If a knot $K$ is fertile, 
$b(K) \leq 3$ if $c(K)$ is even, and $b(K)\leq 4$ if $c(K)$ is odd. 
\end{corollary}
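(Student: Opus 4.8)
\textbf{Proof proposal for Corollary \ref{cor:braid-index}.}
The plan is to combine Proposition \ref{prop:s(D)} with the classical theorem of Yamada, which identifies the braid index $b(K)$ with the minimum of the number of Seifert circles $s(D)$ taken over all diagrams $D$ of $K$. In particular $b(K) \leq s(D)$ for \emph{any} diagram $D$ of $K$, and in particular for a minimum crossing diagram.

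First I would invoke Proposition \ref{prop:s(D)}: assuming $K$ is fertile, in the case $c(K)$ even it supplies a minimum crossing diagram $D$ of $K$ with $s(D) \leq 3$, and in the case $c(K)$ odd it supplies a minimum crossing diagram $D$ with $s(D) \leq 4$. Then I would apply Yamada's inequality $b(K) \leq s(D)$ to this particular diagram, which immediately yields $b(K) \leq 3$ when $c(K)$ is even and $b(K) \leq 4$ when $c(K)$ is odd. That is the whole argument.

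There is essentially no obstacle here; the statement is a direct consequence of Proposition \ref{prop:s(D)} once one knows that the braid index is bounded above by the Seifert circle number of any diagram. The only thing to be careful about is citing Yamada's theorem (equivalently, the Yamada--Vogel algorithm producing a braid presentation on $s(D)$ strands) correctly, and noting that we only need the easy inequality $b(K)\le s(D)$, not the full equality.
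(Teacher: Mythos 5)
Your argument is correct and is exactly the intended one: the paper states the corollary without proof as an immediate consequence of Proposition \ref{prop:s(D)} together with Yamada's theorem that $b(K)=\min_D s(D)$, which is also invoked implicitly inside the proof of the proposition itself (in the step $-s(D)+c\leq -b(K_{c-1})+c$). One small correction: the inequality $b(K)\leq s(D)$ is the \emph{nontrivial} direction of Yamada's theorem (constructing a braid presentation on $s(D)$ strands via the Yamada--Vogel algorithm), not the easy one, but the citation is still the right one and the proof stands.
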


Our proof of main theorem is based on the following observation on the crossing number and the genus of closed 3-braids which is interesting in its own right.
\begin{theorem}
\label{theorem:genus}
If $b(K)\leq 3$, then $-3 +\frac{3}{5}c(K) \leq 2g(K)-1$.
\end{theorem}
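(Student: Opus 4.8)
The plan is to realize $K$ as a closed $3$-braid and argue by the classification of conjugacy classes in $B_{3}$. Since $b(K)\le3$, write $K=\widehat{\beta}$ with $\beta\in B_{3}$ whose underlying permutation is a $3$-cycle, and let $h=(\sigma_{1}\sigma_{2})^{3}=\Delta^{2}$ generate the centre of $B_{3}$. By the Murasugi/Birman--Menasco classification of conjugacy classes — together with Xu's computation of the genus of closed $3$-braids, which I would use for the genus bookkeeping — and since $c(K)$, $g(K)$ and the mirror operation (which replaces $h^{d}$ by $h^{-d}$) are all compatible, we may assume, up to conjugacy and mirroring, that $\beta$ is one of: a torus-type braid $h^{d}\sigma_{1}^{m}$ or $h^{d}(\sigma_{1}\sigma_{2})^{\pm1}$, whose closure is a $2$- or $3$-strand torus knot after a destabilization; one of finitely many small exceptional braids; or the generic form $\beta=h^{d}\,\sigma_{1}^{a_{1}}\sigma_{2}^{-b_{1}}\cdots\sigma_{1}^{a_{k}}\sigma_{2}^{-b_{k}}$ with $k\ge1$ and all $a_{i},b_{i}\ge1$; in the last case put $A=\sum a_{i}$, $B=\sum b_{i}$.

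Next I would dispose of the cases in which one in fact gets the stronger bound $2g(K)-1=c(K)-3$ (stronger because $c(K)\ge0$ gives $c(K)-3\ge\tfrac{3}{5}c(K)-3$). For torus knots $T(p,q)$ with $p\le3$ and for the small exceptional knots this is a direct check. For the generic form with $d=0$: the standard closed-braid diagram of $\sigma_{1}^{a_{1}}\sigma_{2}^{-b_{1}}\cdots$ is reduced alternating when $k\ge2$ (and destabilizes to a $(2,n)$-torus knot when $k=1$), so the Tait conjectures give $c(K)=A+B$ and the theorem of Crowell and Murasugi gives $2g(K)-1=(A+B)-3=c(K)-3$. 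For the generic form with $d$ large — it suffices that $d\ge B$, via repeated use of the identity $\sigma_{2}^{-1}h=\sigma_{1}\sigma_{2}^{2}\sigma_{1}\sigma_{2}$, and symmetrically $d\le-A$ — the braid $\beta$ is conjugate to a positive (resp.\ negative) $3$-braid word of length $|e(\beta)|$, whose closure is, by Stallings, a fibred knot with fibre the Bennequin surface; hence $2g(K)-1=|e(\beta)|-3\ge c(K)-3$.

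The only genuinely delicate range is $0<|d|<B$ (and its mirror), where the closure is neither alternating nor visibly positive or negative, and this is where the constant $\tfrac{3}{5}$ is forced. Here I would use Xu's band-generator normal form together with his genus formula: one conjugates $\beta$ to a band-generator word whose band (Bennequin) surface is of minimal genus, so that $2g(K)-1$ equals the band length minus $3$, and then bounds $c(K)$ above by the number of $\sigma_{i}^{\pm1}$-crossings in a word representing the same braid. Rewriting the band generators costs at most a factor $5/3$: a block $a_{3}^{\pm1}=\sigma_{1}^{\pm1}\sigma_{2}^{\pm1}\sigma_{1}^{\mp1}$ spends three $\sigma$-letters against one band letter, but after conjugating away the isolated $a_{3}^{\pm1}$-blocks (each conjugate to $\sigma_{2}^{\pm1}$) and using $0<|d|<B$ to control the residual pattern, the worst ratio of $\sigma$-length to band length turns out to be $5:3$, giving $c(K)\le\tfrac{5}{3}\bigl(2g(K)+2\bigr)$, i.e.\ the claimed inequality, with equality approached by a one-parameter family of closed $3$-braids realizing that worst case. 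The main obstacle is precisely this bookkeeping over the finitely many residual configurations — small $|d|$, and short positive band words using all three generators — where the obvious Seifert surface and the obvious diagram are both far from optimal, so that both the genus lower bound and the crossing upper bound must be extracted from a carefully chosen intermediate representative. As an independent check, and an alternative to the length comparison, one can compute $\Delta_{K}$ from the explicit $2\times2$ reduced Burau matrix of $\beta$, verify that in the generic form the extreme coefficients of $\det(\overline{\rho}(\beta)-I)/(1+t+t^{2})$ do not cancel, and conclude $2g(K)\ge\deg\Delta_{K}\ge\tfrac{3}{5}c(K)-2$.
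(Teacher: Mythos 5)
Your proposal circles the right ingredients---Bennequin/Xu minimal-genus Bennequin surfaces for closed $3$-braids, and the fact that rewriting a band letter $a_3^{\pm1}$ costs three $\sigma$-letters---but the one step that actually produces the constant $\frac{3}{5}$ is missing. In your ``delicate'' case you assert that ``after conjugating away the isolated $a_3^{\pm1}$-blocks \dots the worst ratio of $\sigma$-length to band length turns out to be $5:3$.'' As stated this is not an argument: you cannot conjugate away individual letters sitting inside a word (conjugation acts on the whole braid, not on one syllable), and without some normalization the naive ratio is $3:1$, not $5:3$: a minimal band word in which $a_3^{\pm1}$ predominates would only give $c(K)\leq 3\,\ell_{\mathcal{B}}(\beta)$, hence only $2g(K)-1\geq \frac{1}{3}c(K)-3$, which is too weak for the application in Theorem~\ref{theorem:main}. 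So the inequality you need is exactly the part you have deferred to ``bookkeeping over the finitely many residual configurations.''

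The paper closes this gap with a one-line symmetry observation that your write-up does not contain: conjugation by $\sigma_1\sigma_2$ cyclically permutes the band generators, $a_1\mapsto a_2\mapsto a_3\mapsto a_1$, and preserves band length, so a minimal band-length representative $\beta_0$ may be chosen (among its three cyclic conjugates) so that the number $A_3$ of letters $a_3^{\pm1}$ satisfies $A_3\leq A_1$ and $A_3\leq A_2$, hence $A_3\leq\frac{1}{3}(A_1+A_2+A_3)$. Then $c(K)\leq A_1+A_2+3A_3\leq\frac{5}{3}(A_1+A_2+A_3)=\frac{5}{3}\ell_{\mathcal{B}}(\beta_0)$, and Bennequin's equality $2g(K)-1=-3+\ell_{\mathcal{B}}(\beta_0)$ finishes the proof in three lines, with no appeal to the Murasugi/Birman--Menasco classification. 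Your preliminary cases (the alternating $d=0$ form via Tait and Crowell--Murasugi, the positive/negative range via Stallings, torus and exceptional braids) are correct as far as they go but become unnecessary once this symmetry is in hand; and your fallback via the reduced Burau matrix would require the same unproved degree-versus-crossing-number estimate. If you insert the cyclic-conjugation normalization, your delicate case collapses to the paper's computation and the rest of the case analysis can be discarded.
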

\begin{proof}
Let $\sigma_1,\sigma_2$ be the standard generator of the 3-strand braid group $B_3$ and let $\mathcal{B}=\{a_1=\sigma_{1,2} =\sigma_1, a_2=\sigma_{2,3}= \sigma_{2},a_3=\sigma_{1,3}=\sigma_{2}\sigma_1\sigma_{2}^{-1}\}$ be the band generators of $B_3$. A band generator $\sigma_{i,j}$ can be viewed as a boundary of a twisted band connecting $i$-th and $j$-th strand. Thus for a closed 3-braid representative $\beta$ of $K$, from a word representative of $\beta$ over the band generator $\mathcal{B}$ one can construct a Seifert surface of $K$ called a  Bennequin surface.

For $\beta \in B_3$ let $\ell_{\mathcal{B}}(\beta)$ be the length of $\beta$ with respect to the band generator $\mathcal{B}$. By Bennequin's theorem \cite{Be} (see also Birman-Menasco \cite{BM}) a closed 3-braid $K$ bounds a minimum genus Bennequin surface hence
\[
\label{eqn:Bennequin} 2g(K)-1 = \min\{-3+\ell_{\mathcal{B}}(\beta)\: | \: \beta \mbox{ represents } K \}
\]
Let $\beta_0$ be a 3-braid representative of $K$, written as a product of band generators that attains the minimum of $\ell_{\mathcal{B}}$ and $A_1,A_2,A_3$ be the number of letters $a_1^{\pm 1},a_2^{\pm 1},a_3^{\pm 1}$ in $\beta_0$.  Since $(\sigma_1\sigma_2)a_i(\sigma_1\sigma_2)^{-1} = a_{i+1}$ (here indices are taken mod $3$), by taking conjugates if necessary, we may assume that $A_3\leq A_1,A_2$. 

Since a closed braid $\beta_0$ can be seen as a diagram with crossings $A_1 + A_2 + 3 A_3$,
\[ A_1 + A_2+ 3A_3 \geq c(K). \]
Therefore 
\[ 2g(K)-1 = -3+\ell_{\mathcal{B}}(\beta)=-3+(A_1 + A_2 + A_3) \geq -3+ \frac{3}{5}c(K).\]
\end{proof}

\begin{proof}[Proof of Theorem \ref{theorem:main}]
If $c=c(K)$ is even, by Corollary \ref{cor:braid-index} $b(K) \leq 3$. Hence by Theorem \ref{theorem:genus} and Proposition \ref{prop:s(D)} 
\[ -3+ \frac{3}{5}c \leq 2g(K)-1 \leq \frac{c}{2} \iff c \leq 30 \] 
Similarly, if $c=c(K)$ is odd and $b(K)=3$, 
\[ -3+ \frac{3}{5}c\leq 2g(K)-1 \leq \frac{c-1}{2} \iff c \leq 25\] 
\end{proof}

Although in this note we mainly restrict our attention to the fertility, the same argument gives an information about more general notions; a knot $K$ is called \emph{$n$-fertile} if for any prime knot $K'$ with $c(K')\leq n$, there exists a minimum crossing number diagram $D$ of $K$ such that $K'$ is supported by $S(D)$. The \emph{fertility number} $F(K)$ is the maximum integer $n$ such that $K$ is $n$-fertile. 

\begin{theorem}
Let $K$ be a knot.
\begin{enumerate}
\item[(i)] $F(K) \leq c(K)-b(K)+3$. 
\item[(ii)] If $b(K) \leq 3$, $F(K) \leq \frac{4}{5}c(K)+6$. 
\end{enumerate}
\end{theorem}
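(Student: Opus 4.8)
The plan is to run the arguments of Proposition~\ref{prop:s(D)}, Theorem~\ref{theorem:genus} and Theorem~\ref{theorem:main} with ``fertile'' weakened to ``$n$-fertile'', feeding in test knots whose crossing number is bounded by $n$: for (i) the torus knots $T_{2,m}$ with $m$ odd and $m\le n$, and for (ii) the twist knots $K_m$ with $c(K_m)=m\le n$. Two facts about a shadow $S$ supporting a knot $K'$ are used. The first is the genus inequality already exploited in the excerpt, namely $2g(K')-1\le 2g(S)-1=-s(S)+c(S)$. The second is that $s(S)\ge b(K')$: the number of Seifert circles of a diagram depends only on its shadow, since Seifert's algorithm ignores the over/under information and reversing the orientation of a knot does not change the family of Seifert circles; hence any diagram of $K'$ whose shadow is $S$ has exactly $s(S)$ Seifert circles, and Yamada's theorem (the braid index equals the minimal number of Seifert circles over all diagrams) yields $s(S)\ge b(K')$.

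For (i) I would proceed as follows. One may assume $K$ is $n$-fertile with $n\ge 3$, since for $n\le 2$ there is no prime knot with $c\le 2$, so $F(K)\le 2\le c(K)-b(K)+3$ (here $b(K)\le c(K)+1$ because the Seifert graph of a minimum diagram is connected). Let $m$ be the largest odd integer with $m\le n$, so $T_{2,m}$ is a prime knot with $c(T_{2,m})=m\le n$; $n$-fertility gives a minimum crossing diagram $D$ of $K$ whose shadow supports $T_{2,m}$. Then $m-2=2g(T_{2,m})-1\le -s(D)+c(D)=-s(D)+c(K)\le -b(K)+c(K)$, the last inequality being Yamada's theorem for the diagram $D$ of $K$ itself. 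Hence $m\le c(K)-b(K)+2$, and as $n\le m+1$ we obtain $F(K)\le c(K)-b(K)+3$.

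For (ii) assume in addition $b(K)\le 3$, and let $K$ be $n$-fertile with $n\ge 3$. Applying $n$-fertility to the prime twist knot $K_n$ (which has $c(K_n)=n$) produces a minimum crossing diagram $D$ of $K$ whose shadow $S(D)$ supports $K_n$. By the Seifert-circle remark above and the braid index of twist knots recorded in the proof of Proposition~\ref{prop:s(D)}, $s(D)=s(S(D))\ge b(K_n)\ge \tfrac n2$. On the other hand $2g(K)-1\le 2g(D)-1=-s(D)+c(K)$, so $s(D)\le c(K)-(2g(K)-1)$; and Theorem~\ref{theorem:genus}, applicable because $b(K)\le 3$, gives $2g(K)-1\ge -3+\tfrac35 c(K)$, whence $s(D)\le c(K)-(-3+\tfrac35 c(K))=\tfrac25 c(K)+3$. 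Combining the two bounds, $\tfrac n2\le \tfrac25 c(K)+3$, i.e.\ $n\le \tfrac45 c(K)+6$, so $F(K)\le \tfrac45 c(K)+6$.

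The only input beyond what is already in the excerpt is the inequality $s(S)\ge b(K')$ for a shadow supporting $K'$, and the one delicate point is its proof: one has to be sure that $s(S)$ is genuinely an invariant of the shadow — independent both of the over/under data and of the orientation — so that Yamada's theorem may be applied to the supported knot $K'$ and not merely to $K$. Granting that, both parts reduce to the same genus and Seifert-circle bookkeeping as in Proposition~\ref{prop:s(D)}, Theorem~\ref{theorem:genus} and Theorem~\ref{theorem:main}, now with $n$ in the role of $c(K)-1$.
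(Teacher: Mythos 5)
Your proof is correct and follows essentially the same route as the paper: the paper's own (very terse) proof likewise runs the arguments of Proposition \ref{prop:s(D)} and Theorem \ref{theorem:main} with the test knots $T_{2,m}$ and the twist knots of crossing number at most $m$, using exactly the genus bound $2g(K')-1\le -s(S)+c(S)$ and the fact that $s(S)\ge b(K')$ for any knot $K'$ supported by $S$. Your write-up merely makes explicit the shadow-invariance of the Seifert-circle count and the small-$n$ edge cases, which the paper leaves implicit.
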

\begin{proof}
By the same argument as Proposition \ref{prop:s(D)}, if $K$ is $m$-fertile then
$b(K) \leq c(K)-m+3$ if $m$ is even and $b(K) \leq c(K)-m+2$ if $m$ is odd. 
Similarly, when $b(K) \leq 3$ by the same argument as Theorem \ref{theorem:main}, if $K$ is $m$-fertile then $-3+\frac{3}{5}c(K) \leq c-\frac{m}{2}$ if $m$ is even and $-3+\frac{3}{5}c(K) \leq c - \frac{m+1}{2}$ if $m$ is odd.
\end{proof}

\section*{Acknowledgement}
The author has been partially supported by JSPS KAKENHI Grant Number 19K03490,16H02145. He is grateful to Ryo Hanaki whose talk at MSJ Autumn Meeting 2020 stimulates the author to study Question \ref{question:main}.

\end{document}